\newtheorem{Theorem}{Theorem}
\newtheorem{Lemma}[Theorem]{Lemma}
\newtheorem{Definition}[Theorem]{Definition}
\numberwithin{Theorem}{section}
\newcommand{\Tsphere}{S^3}
\newcommand{\ST}{V} 
\newcommand{\Slink}{L_n} 
\newcommand{\Stml}{W} 
\newcommand{\STmLone}{X} 
\newcommand{\STmLtwo}{Y} 
\newcommand{\STmLonetwo}{Z} 
\newcommand{\JSJt}{\mathbf{T}} 
\title{Non-characterizing slopes for more $3$-manifolds}
\author{Matthew Elpers}
\begin{document}
\date{}
\maketitle
\begin{abstract}
    For any homotopy class $h$ in any compact orientable $3$-manifold $M$ which is closed or has exclusively torus boundary components, we produce infinitely many pairs of distinct knots representing $h$ with orientation-preserving homeomorphic $0$-surgeries. 
\end{abstract}
\section{Introduction}
\par The topology of $3$-manifolds is deeply connected to knot theory by the well-known result of Lickorish \cite{LW} and Wallace \cite{Wall} which states that every closed and orientable $3$-manifold can be obtained by surgery of a framed link in the $3$-sphere $\Tsphere$. The link used in this surgery is not unique.  Kirby \cite{Kirby} produced an equivalence relation on framed links such that two links are equivalent if and only if they yield the same manifold after surgery. Kirby then asked if two different knots with the same framing could appear in a single equivalence class. Lickorish \cite{Lick} first answered the question in the affirmative, producing two knots with framing $-1$ which share the same surgery. One can give a uniqueness result for these equivalence classes in the language of characterizing slopes. For a knot $K$ in $S^3$ we say $p/q$ is a characterizing slope if whenever $S^3_{p/q}(K)$ is orientation-preserving diffeomorphic to $S^3_{p/q}(K')$, then $K'$ is isotopic to $K$. In the positive direction, Kronheimer, Mrowka, Ozsv\'ath, and Szab\'o \cite{KMOS} showed all non-trivial slopes are characterizing for the unknot. Furthermore, Ozsv\'ath, and Szab\'o \cite{OS} showed the same holds for the trefoil and figure-eight knots. Lackenby \cite{LackIM} showed that every knot has infinitely many characterizing slopes. This, along with work of McCoy \cite{McCoy}, and Sorya \cite{Sorya} proves the existence of a bound $C(K)$ for which all $q>C(K)$ will be characterizing. Later, Sorya and Wakelin \cite{WS} provided a method for computing $C(K)$. The most recent results in characterizing slopes include \cite{HPW} and \cite{KS}.  In the negative direction Baker and Motegi \cite{BM} provided examples of knots admitting infinitely many non-characterizing slopes. 
\par Furthermore, one can try to generalize characterizing slopes to other $3$-manifolds. Although, there is an added difficulty because in arbitrary $3$-manifolds there  non-nullhomologous knots and as a result do not have a Seifert framing. Famous results of Berge \cite{Berge} and Gabai \cite{Gabai} classify all knots in $D^2 \times S^1$ which can produce $D^2 \times S^1$. In $S^2 \times S^1$ M. Kim and J. Park \cite{KP} as well as Hayden, Mark, and Piccirillo \cite{HMP} showed that there are infinitely many pairs of distinct knots in $S^2 \times S^1$ whose surgeries produce the same homology sphere. We prove an analogue for a more general class of $3$-manifolds. Our choice of 0-framing is outlined in Definition \ref{SatFra} and the paragraph which follows it.

\begin{Theorem} \label{mainthm}
    Let $M$ be any compact, connected, orientable $3$-manifold, such that $M$ is closed or $\partial M$ contains only tori. For every homotopy class $h$ in $\pi_1(M)$ there are infinitely many pairs of knots representing $h$ which have orientation-preserving homeomorphic $0$-surgeries. 
\end{Theorem}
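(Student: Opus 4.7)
My plan is to reduce to Osoinach's annulus-twist construction in $S^3$ via a local connect-sum operation inside $M$. Recall that Osoinach produced an infinite family $\{J_n\}_{n \geq 1}$ of pairwise non-isotopic knots in $S^3$ whose $0$-surgeries are all orientation-preservingly homeomorphic to a fixed oriented $3$-manifold $\Sigma$. I intend to transplant this family into $M$ locally and verify that both the homotopy class $h$ and the $0$-surgery (as defined by the satellite framing) are preserved under the transplant.

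Given $h \in \pi_1(M)$, fix any embedded representative $K \subset M$ with its satellite $0$-framing $\lambda_K$ from Definition \ref{SatFra}. Choose a small ball $B \subset M$ whose interior meets $K$ in a single unknotted arc, place a copy of $J_n$ inside $B$, and form $K_n := K \# J_n$ using a connect-sum band contained in $B$. Since the modification is confined to a ball, each $K_n$ is freely homotopic to $K$ and therefore represents $h$. The natural $0$-framing of $K_n$ is $\lambda_{K_n} = \lambda_K \# \lambda^{\mathrm{Seif}}_{J_n}$, where $\lambda^{\mathrm{Seif}}_{J_n}$ is the Seifert framing of $J_n$ in $S^3$; a direct check against Definition \ref{SatFra} should show this agrees with the satellite $0$-framing of $K_n$ in $M$, since the satellite data of $K$ outside $B$ is untouched and inside $B$ the knot $J_n$ is nullhomotopic.

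With the framings in hand, the classical $4$-dimensional observation that $0$-framed $2$-handle attachment commutes with ambient connect-sum yields
\[
M_{0}(K_n) \;\cong\; M_{0}(K) \;\#\; S^3_{0}(J_n) \;=\; M_{0}(K) \;\#\; \Sigma
\]
as oriented $3$-manifolds, with the homeomorphism orientation-preserving and independent of $n$. Therefore any two indices $n \neq m$ produce knots $K_n, K_m \subset M$ representing $h$ with orientation-preservingly homeomorphic $0$-surgeries.

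The main obstacle is verifying that the $K_n$ are pairwise non-isotopic in $M$, so that one obtains genuinely infinitely many distinct pairs. For this I would analyze the knot exterior $E_n := M \setminus \nu(K_n)$: the sphere $\partial B$ meets $K_n$ transversely in two points, so it restricts to an essential annulus $A_n \subset E_n$ splitting $E_n$ into a (once-punctured) exterior of $K$ in $M$ and a (once-punctured) exterior of $J_n$ in $S^3$. Combining the uniqueness of the Kneser--Milnor and JSJ decompositions with the Gordon--Luecke theorem applied to the $S^3$-summand, the isotopy class of $J_n$ should be recoverable from $E_n$, so distinct $J_n$'s force non-isotopic $K_n$'s. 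Extra care is needed when $M$ already contains reducing spheres or when $K$ itself admits a nontrivial connect-sum decomposition, but the uniform structure of the prime decomposition applied inside $E_n$ should handle this.
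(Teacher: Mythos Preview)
Your central surgery identity
\[
M_{0}(K \# J_n) \;\cong\; M_{0}(K)\;\#\;S^3_{0}(J_n)
\]
is false, and the whole argument collapses with it. A homology count already rules it out: if $K$ is nullhomologous in $M$ (take $M=S^3$ if you like), then so is $K\# J_n$, and $H_1\bigl(M_0(K\# J_n)\bigr)\cong H_1(M)\oplus\mathbb{Z}$, whereas $H_1\bigl(M_0(K)\# S^3_0(J_n)\bigr)\cong H_1(M)\oplus\mathbb{Z}\oplus\mathbb{Z}$ since $H_1(S^3_0(J_n))\cong\mathbb{Z}$. Concretely, with $M=S^3$ and $K$, $J_n$ both unknots, your formula would give $S^2\times S^1\cong (S^2\times S^1)\#(S^2\times S^1)$. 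The decomposing $2$--sphere $\partial B$ does \emph{not} survive the surgery: it meets $K_n$ in two points, so inside $E_n=M\setminus\nu(K_n)$ it becomes an essential \emph{annulus}, not a sphere, and after the single Dehn filling there is no separating sphere produced. There is no ``$0$-framed $2$-handle commutes with connect sum'' principle of the kind you invoke; the trace of $0$-surgery on $K\# J_n$ has one $2$-handle, while $X_0(K)\natural X_0(J_n)$ has two.

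A smaller but related issue: Definition~\ref{SatFra} only assigns a framing to a knot that has been \emph{built} as a satellite $P(C_\ell)$ from a framed pattern $P$; an arbitrary representative $K$ of $h$ carries no canonical ``satellite $0$-framing'', so your $\lambda_K$ is undefined as written. The paper sidesteps both problems by working entirely inside a solid-torus pattern: it takes a hyperbolic representative $K$ (Myers), builds explicit hyperbolic patterns $A_n,B_n\subset V$ that are homotopic to the core, have non-homeomorphic exteriors (distinguished by volume), and share the same $0$-surgery \emph{in $V$}; the satellite framing then comes for free, the $0$-surgeries of $A_n(K_\ell)$ and $B_n(K_\ell)$ agree because both equal the $(0,0)$-surgery on the common two-component pattern link, and distinctness follows from the two-piece JSJ decomposition $M_K\cup Y_n$ versus $M_K\cup X_n$. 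If you want to salvage a connect-sum style idea, you would need pairs $J,J'$ in $S^3$ with $S^3_0(J)\cong S^3_0(J')$ and then compare $K\# J$ with $K\# J'$; the surgeries are then toroidal rather than reducible, and you are essentially back to the paper's satellite/JSJ argument.
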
 
 Now we describe the general method constructing our knots. For any such $M$ and $h$, by a theorem of Myers \cite{Myers}, there is a hyperbolic knot $K$ such that $[K]=h$. We then modify the choice of representative $K$ using the generalized satellite operation from Section $2$.  This modification is performed using a family of pairs $A_n$ and $B_n$ of pattern knots which are defined in Section $3$. Wherein, we verify their key properties; namely: they are hyperbolic, are homotopic to core curves, have exteriors with different hyperbolic volumes, and orientation-preserving homeomorphic $0$-surgeries. Consequently, the satellite knots $A_n(K_\ell)$ and $B_n(K_\ell)$ are homotopic to $K$ and have JSJ decomposition with only two pieces: the original knot exterior and the pattern space. Using this we are able to show $A_n(K_\ell)$ and $B_n(K_\ell)$ satisfy the statement of the theorem.  
\\
\textbf{Organization:} In Section 2 we specify framings for our knots, define a satellite operation, review Dehn surgery, and recall the JSJ decomposition for $3$-manifolds. In Section 3, we define the knots $A_n$ and $B_n$ in the solid torus and verify their key geometric properties. In Section 4, we prove the main theorem.
\\
\textbf{Acknowledgments:} The author would like to thank his advisor Tye Lidman for his insight, patience, and assistance while completing this work. Additionally, thanks to Ken Baker, Allison Moore for helpful conversations regarding this work. As well as Laura Wakelin and Patricia Sorya for reviewing previous versions of this article. ME was partially supported by NSF Grants DMS-2105469 and DMS-2506277. 
\section{Preliminaries} 
     For the remainder of this paper let $M$ be a compact orientable $3$-manifold with exclusively torus boundary components and $V$ denote $D^2 \times S^1$ where $D^2$ is a disk of radius $1$ centered at the origin in $\mathbb{R}^2$ and $S^1$ is the unit interval $[0,1]$ with endpoints identified. The \emph{core curve} $c$ of $V$ is $\{(0,0)\} \times S^1$, the longitude $l$ of $V$ is the curve $\{(1,0)\} \times S^1$, and the meridian $m$ is the curve $\partial D^2 \times \{0\}$.  A \emph{framed knot} $(K,\ell)$ in $M$ is knot $K \subset M$ together with a trivialization $\ell$ of $\nu(K)$, the normal bundle of $K$. Alternatively $\ell$ can be thought of as a parallel copy of $K$ which lies on $\partial\nu(K)$, the boundary of a tubular neighborhood of $K$. As we will see, framed knots can be used in Dehn surgery to construct and study new and interesting examples of $3$-manifolds. 
    \par A \emph{Dehn surgery} on $K \subset M$ involves two steps. First, we remove a tubular neighborhood of $K$ from $M$ to form the knot exterior $M_K=M - \nu(K)$.  Second, we perform a \emph{Dehn filling} of $\partial\nu(K)$, which is the process of gluing a solid torus $\ST$ to the toroidal boundary component $\partial\nu(K) \subset \partial M_K$ via a homeomorphism $\phi$ which sends the meridian of $V$ to a simple closed curve $\gamma$ on the boundary of the knot exterior.  
    $$ \phi:\partial\ST \to \partial\nu(K) \subset \partial M_K, ~~ \phi( \partial D^2 \times \{0\})=\gamma$$
    We denote the result of $\gamma$ surgery on $K$ by $M_\gamma(K)$.
    Note that the knot exterior has one more torus boundary component than $M$ which corresponds to $\partial\nu(K)$. We can specify a basis of $H_1(\partial\nu(K))$ as $\mu_K$, the curve on $\partial\nu(K)$ which bounds a disk in $\nu(K)$, and a framing $\ell$. Consequently, every essential simple closed curve $\gamma \subset \partial\nu(K) $ is of the form $p\mu_K+q\ell$. Then, when a basis has been specified, we alternatively denote $M_\gamma(K)$ by $M_{p/q}(K)$ and refer to it as the $p/q$-Dehn surgery of $K$. Before we continue we note a few important facts and notations regarding Dehn surgery. The resultant $M_{p/q}(K)$ only depends on $p/q$ as an element of $\mathbb{Q} \cup \{\infty\}$, where $p/0 := \infty$. For null-homologous knots $K$ there is a canonical choice of framing $\lambda_K$ called the \emph{Seifert framing}. The result of $1/0$-Dehn surgery on any $K$ always returns the starting $3$-manifold $M$, i.e. $M_{1/0}(K)=M$. A framed link $(L,\ell)$ is link $L=L_1 \cup L_2 \cup \dots L_n$ for which every constituent knot $L_i$ is equipped with a framing $\ell_i$. We can specify a Dehn surgery on $L$ by choosing surgery coefficients $p_i/q_i$ for each framed knot and doing each surgery individually. For framed links we specify the coefficient $\ast$ to mean the corresponding component remains unfilled, e.g. 
    $$ M_{p_1/q_1,\ast}(L_1 \cup L_2)= M_{p_1/q_1}(L_1) - \nu(L_2).$$
    
    \par As mentioned previously we will use a satellite construction to find new representatives for $h$ using $K$. For the following definitions we will work with an arbitrary framed knot $K \subset M$.  The satellite operation takes in $(K,\ell)$ and a link $P$ in $V$ and outputs a new knot $P(K_\ell)$ in $M$. Given the additional information of a framing of $P$ we can construct a framing for $P(K_\ell)$.
\begin{Definition} \label{defSat}
    A satellite link $P(C_{\ell})$ with pattern link $P \subset \ST$, and companion knot $C \subset M$ is the image of $P$ under the oriented homeomorphism $\psi: \ST \to \nu(C)$ which sends the longitude of the solid torus to the framing $\ell$ of the knot $C$.
\end{Definition}
\par Note that when $M=S^3$ and $\ell=\lambda_C$ this is the standard satellite operation. We are particularly interested in the case when $P\subset V$ has components which are homotopic to a core curve and the \emph{pattern space} $V_P$ is a hyperbolic manifold.  
\begin{Definition} \label{SatFra}
    For any satellite knot $P(C_\ell) \subset M$ and any framing $\vartheta$ of $P$, there is an induced framing of $P(C_\ell)$ by $\psi(\vartheta)$.  
\end{Definition}
    We now describe the key scenario from which our framed pattern knots will arise. Suppose $L \subset S^3$ is a link in $S^3$ with a component $U$ which is unknotted. The exterior of $U$ is $V$ and the remaining components of $L$ can be thought of as embedded knots in $V$. Suppose $\tilde{P} \subset L$ is another component of $L$ and denote the corresponding knot in $S^3_U$ by $P$. Since $\tilde{P}$ is null-homologous in $S^3$ it has Seifert framing $\lambda_{\tilde{P}}$, the inclusion of this Seifert framing into $S^3_U$ is then a framing for the knot $P \subset V$. 

Our main tool for studying Dehn surgeries will be the JSJ decomposition. We note this outline closely follows that of Sorya \cite{Sorya}. Recall that for any compact irreducible orientable 3-manifold $M$, there is a minimal collection $\JSJt$ of properly embedded disjoint incompressible tori and annuli such that each component of $M \setminus \JSJt$ is either a hyperbolic or a Seifert fibred manifold, and this collection is unique up to isotopy. The \textit{JSJ decomposition} of $M$ is given by:
\[M = M_0 \cup M_1 \cup \ldots \cup M_k,\]
where each $M_i$ is the closure of a component of $M \setminus \JSJt$. A manifold $M_i$ is called a \emph{JSJ piece} of $M$ and a torus in the collection $\JSJt$ is called a \textit{JSJ torus} of $M$. We denote the collection of \emph{JSJ pieces} of $M$ by $JSJ(M)$. Any homeomorphism between compact irreducible orientable 3-manifolds can be seen as sending JSJ pieces to JSJ pieces, up to isotopy. 
\par We will often deal with the case that our particular $3$-manifold of interest $M$ is formed by gluing two hyperbolic $3$-manifolds $M_1$ and $M_2$ along incompressible boundary tori. In this case the resulting $3$-manifold must be irreducible because any newly formed essential $2$-sphere in $M$ can be isotoped into either $M_1$ or $M_2$ which is impossible since both are hyperbolic. The most common way we will see this is satellite knot exteriors. For example, given knots $K \subset S^3$ and $P \subset V$  with hyperbolic exteriors $M_K$ and $V_P$,  the exterior of the satellite knot $P(K)$ is irreducible and the JSJ decomposition is $V_P \cup M_K$. See Budney \cite{Bud} for a detailed exposition on JSJ decompositions of link exteriors in $S^3$. 

\section{Describing the knots $A_n$ and $B_n$}
In order to use and verify the key features of the links we construct we will need to use multiple different perspectives. First, we consider a four component link $\overline{L} \subset S^3$, as seen in Figure \ref{linkP}. We use the computer program SnapPy to analyze it. Second, we use $\overline{L}$ to construct a corresponding three component link $L=\gamma \cup \tilde{A}\cup B $ in $S^3 -\nu(\mu) \cong V$. Third, we perform $-1/n$-Dehn surgery of $\gamma$ which adds $n$ twists into a component of $L$ resulting in the two component link $L_n=\tilde{A_n}\cup B$ in $V$. Finally, $L_n$ is used to construct $A_n$ and $B_n$ in Definition \ref{patternref}.  Which will be the pattern knots used to construct new representatives of $h$.

\begin{figure}[h]
    \centering
    \labellist 
    \small \hair 2pt
    \pinlabel \textcolor{green}{$B$} at 200 400
    \pinlabel \textcolor{red}{$\tilde{A}$} at 570 300
    \pinlabel \textcolor{blue}{$\gamma$} at 320 200
    \pinlabel \textcolor{cyan}{$\mu$} at 190 275
    \endlabellist
    \includegraphics[width=3in]{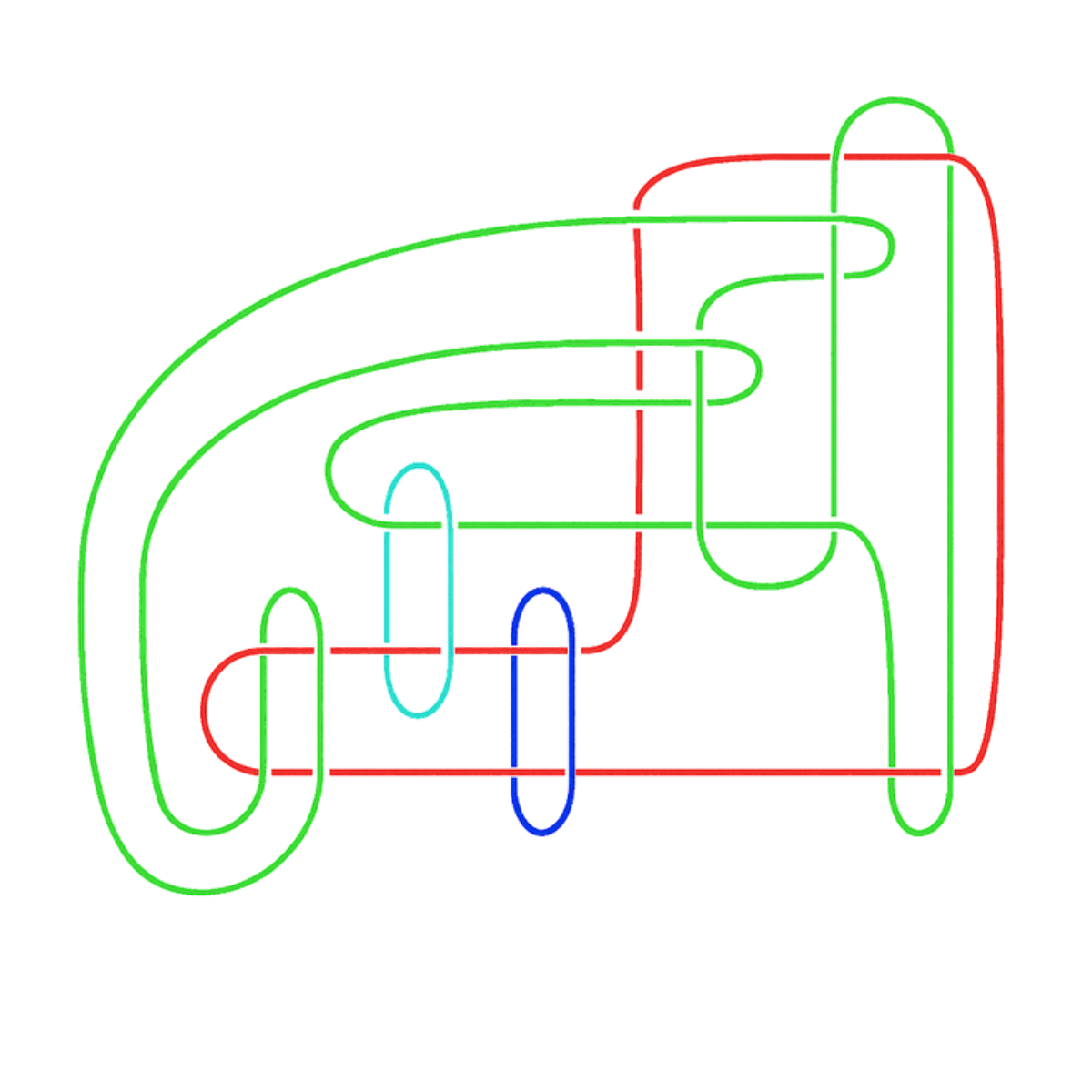}
    \caption{The link $\overline{L}$ in $S^3$}
    \label{linkP}
\end{figure}

\begin{Definition} \label{DefP}
    The link $\overline{L} \subset S^3$ is the four component link $\overline{L}=\gamma \cup \tilde{A} \cup B  \cup \mu$ as in Figure $\ref{linkP}$.  Similarly, we consider the links $L_n \subset V$. To construct $L_n$ from $L$ we do  $-\frac{1}{n}$-Dehn surgery of $\gamma$ using the Seifert framing from $S^3$ which adds $n$ horizontal twists into the knot $\tilde{A}$. We denote the $n$-twisted version of $\tilde{A}$ by $\tilde{A_n}$. 
\end{Definition}
    We now construct a set of manifolds by surgery on $L_n$. Note that surgeries on $L_n$ have an equivalent description by the $3$-component link $L \subset V$. 
\begin{figure}[h]
    \centering
    \labellist 
    \small \hair 2pt
    \pinlabel \textcolor{green}{$0$} at 200 400
    \pinlabel \textcolor{red}{$0$} at 570 300
    \pinlabel \textcolor{cyan}{$\ast$} at 190 275
    \pinlabel \textcolor{blue}{$-\frac{1}{n}$} at 330 200
    \endlabellist
    \includegraphics[width=3in]{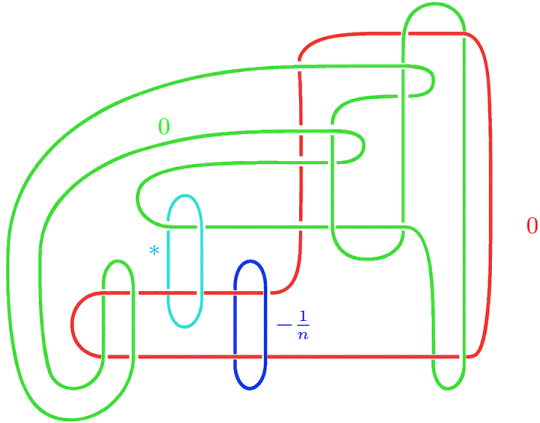}
    \caption{Surgery Diagram for $Z_n$}
    \label{Zee}
\end{figure}

\begin{Definition} \label{ManiLn}
     Fix $L_n \subset \ST$ as in Definition \ref{DefP}. We define the following sequences of manifolds for each $n \in \mathbb{N}$. 
\begin{enumerate}
    \item $\STmLone_n =\ST_{0, \ast}(\Slink)=\ST_{-1/n,0,\ast}(L)$ 
    \item $\STmLtwo_n=\ST_{\ast,0}(\Slink)=\ST_{-1/n,\ast,0}(L)$
    \item $\STmLonetwo_n=\ST_{0,0}(\Slink)=\ST_{-1/n,0,0}(L)$
\end{enumerate}
\end{Definition} 
To prove theorems about our sequences of manifolds we define an additional set of related manifolds which come from leaving $\gamma$ unfilled. 
\begin{Definition} \label{ManiL}
    Fix $L \subset \ST$ as in Definition \ref{DefP}. We define the following manifolds: $\Stml=\ST_{\ast,\ast,\ast}(L)$, $\STmLone=\ST_{\ast,0,\ast}(L)$, $\STmLtwo=\ST_{\ast,\ast,0}(L)$, and $\STmLonetwo=\ST_{(\ast,0,0)}(L)$.
\end{Definition}
\begin{Lemma} \label{listoflimits}
   Each of the manifolds from Definition $\ref{ManiL}$ is hyperbolic.  Additionally, $vol(\STmLone)=29.6209311377130\dots$ and $vol(\STmLtwo)=30.3314052251137\dots$.
\end{Lemma}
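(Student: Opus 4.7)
The plan is to reduce the entire lemma to a rigorous SnapPy computation on the link $\overline{L}$ of Figure \ref{linkP}. Because $V \cong S^3 - \nu(\mu)$, every manifold in Definition \ref{ManiL} is obtained from the exterior of $\overline{L}$ in $S^3$ by leaving the $\mu$ cusp unfilled and performing $0$-surgery (with respect to the Seifert framing coming from $S^3$) on none, one, or two of $\tilde{A}$ and $B$, while leaving $\gamma$ always unfilled. So the first step is to input $\overline{L}$ into SnapPy from Figure \ref{linkP}, which produces a triangulation $T$ of $W = V_{*,*,*}(L)$ with four cusps labeled by the components.

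Next, I would run SnapPy's \texttt{verify.verified\_canonical\_triangulation} (or, equivalently, \texttt{Manifold.verify\_hyperbolicity}) on $T$. This uses interval Newton methods (HIKMOT-style) to rigorously certify that the gluing equations for $T$ admit a genuine positively oriented solution, which by Thurston's theorem implies that $W$ is hyperbolic. To obtain $\STmLone$, $\STmLtwo$, and $\STmLonetwo$, I would then set Dehn filling coefficients $(0,1)$ on the relevant cusps; the key point here is that SnapPy's default peripheral basis for an unknotted component of a link in $S^3$ is the meridian together with the Seifert-framed longitude, which is exactly the framing specified in Definition \ref{DefP} for the pattern components. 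After filling, \texttt{verify\_hyperbolicity} is rerun on each of the three filled manifolds, yielding rigorous proofs of hyperbolicity.

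For the volume assertions, I would call \texttt{verified\_volume} on the SnapPy manifolds for $\STmLone$ and $\STmLtwo$. This returns a certified interval containing the true hyperbolic volume, and the claimed decimals $29.6209311377130\dots$ and $30.3314052251137\dots$ should lie inside those intervals. Because these volumes differ, Mostow rigidity immediately gives $\STmLone \not\cong \STmLtwo$, a fact the paper will use later.

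The main obstacle I anticipate is not the verification software, which is now routine, but rather ensuring that the peripheral bases SnapPy chooses agree with the geometric framing specified in Definition \ref{DefP}, particularly for the component $\tilde{A}$ after the description involves the auxiliary twist-region component $\gamma$. I would double-check this by (i) examining the linking matrix of $\overline{L}$ in $S^3$ to confirm SnapPy's longitudes are the Seifert longitudes, and (ii) computing $H_1$ of each filled manifold in SnapPy and matching it against a direct Mayer--Vietoris calculation from the surgery presentation; any mismatch would be corrected by a change-of-basis matrix applied before filling.
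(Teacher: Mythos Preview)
Your proposal is correct and follows essentially the same approach as the paper: the paper's own proof is a single sentence stating that each manifold is built from the link diagram in SnapPy within Sage and verified to be genuinely hyperbolic. Your write-up is simply a more detailed and careful version of that same computation, spelling out the verified-hyperbolicity and verified-volume routines and the framing check that the paper leaves implicit.
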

\begin{proof} \label{refhyper}
    Each manifold is constructed using a link diagram in the computer program SnapPy \cite{SnapPy} within Sage and is verified to be genuinely hyperbolic.
\end{proof}
In order to make use of the information from Lemma $\ref{listoflimits}$ to study the sequences from Definition \ref{ManiLn} we employ Thurston's hyperbolic Dehn surgery Theorem which relates a hyperbolic $3$-manifold $M$ with torus boundary to sequences of Dehn fillings. We do not state the theorem in full generality. Instead we state a straightforward corollary which succinctly captures which aspects of the theorem we need.  For any $M$ and any torus boundary component $T$, we can specify a Dehn filling of $M$ by a choice of simple closed curve $\gamma$ on $T$ to which the meridian of $V$ gets glued. We denote the filled manifold $M(\gamma)$.
\begin{Theorem} \label{thurston} \cite{Thurston} Let $M$ be a cusped hyperbolic $3$-manifold with a fixed cusp $T$. Fix a basis $\alpha,\beta$ for $H_1(T)$ and let ${p_i}/{q_i}$ be a sequence such that $p_i^2+q_i^2 \to \infty$.  Then $M(p_i\alpha+q_i\beta)$, is hyperbolic for all but finitely many $i$, furthermore $vol(M(p_i\alpha+q_i\beta))\nearrow vol(M)$. We note this sequence converges monotonically from below for sufficiently large $i$.
\end{Theorem}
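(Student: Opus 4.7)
The plan is to prove this via Thurston's deformation theory of hyperbolic structures together with Gromov's simplicial volume bound. Start with an ideal triangulation $\mathcal{T} = \{\Delta_1,\ldots,\Delta_N\}$ of $M$ (produced, for example, by Epstein--Penner). Each ideal tetrahedron is determined up to isometry by a shape parameter $z_j \in \mathbb{H}^2 \subset \mathbb{C}$, so the space of (not necessarily complete) hyperbolic structures realized by $\mathcal{T}$ is cut out inside $(\mathbb{C}\setminus\{0,1\})^N$ by Thurston's edge-gluing equations. The complete structure on $M$ corresponds to an isolated solution of the combined edge and cusp equations; Mostow--Prasad rigidity identifies it uniquely.

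The key analytic input is that if one drops the cusp completeness equation at the distinguished torus $T$ but retains edge equations and cusp equations at every other cusp, the remaining system cuts out a smooth one-complex-parameter family of deformations near the complete structure (Thurston, Neumann--Zagier). Parametrize this family by the complex derivative of the holonomy around $\alpha$; equivalently, by a pair of complex logarithmic lengths $(u,v)$ dual to the basis $\alpha,\beta$ of $H_1(T)$, normalized so that $u=v=0$ is the complete structure. Each small nonzero $(u,v)$ gives an incomplete hyperbolic metric whose developing image around $T$ has holonomy generated by translations of $\mathbb{C}$. The metric completion attaches a single closed geodesic exactly when the holonomy kernel is generated by $p\alpha+q\beta$, i.e. when $pu+qv = 2\pi i$; the filled manifold is then $M(p\alpha+q\beta)$ and carries a hyperbolic structure by construction.

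To turn this into the statement, I would show that the map from the deformation parameter to generalized Dehn surgery coordinates is a local diffeomorphism near the complete structure, so that for $p_i^2+q_i^2$ large enough there is a unique parameter value realizing $(p_i,q_i)$-filling; these parameters tend to $0$, and the resulting hyperbolic structures on $M(p_i\alpha+q_i\beta)$ converge geometrically to $M$. This yields hyperbolicity of $M(p_i\alpha+q_i\beta)$ for all but finitely many $i$. For the volume statement, two ingredients suffice: (i) Gromov's theorem that simplicial volume does not increase under Dehn filling, combined with Gromov--Thurston $\|\cdot\| = vol/v_3$, gives $vol(M(p_i\alpha+q_i\beta)) \leq vol(M)$ for every filling; (ii) the Neumann--Zagier asymptotic expansion
\[
vol(M(p\alpha+q\beta)) = vol(M) - \frac{\pi^2\,\mathrm{Area}(T)}{\mathrm{length}^2(p\alpha+q\beta)} + O\!\left(\mathrm{length}^{-4}\right),
\]
where the length is measured with respect to the Euclidean structure induced on the cusp, gives both convergence to $vol(M)$ and eventual monotonic approach from below as $p_i^2+q_i^2 \to \infty$.

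The main obstacle is the foundational deformation-theoretic step: proving that the gluing-equation variety is smooth of the expected dimension at the complete structure and that the induced Dehn-surgery coordinates form a smooth chart. This requires a nontrivial implicit function theorem argument using the cohomological interpretation of infinitesimal deformations (the dimension of $H^1$ of the representation variety at the discrete faithful representation), which is precisely the technical heart of Thurston's original Notes. Once this local structure is in place, identifying the completion with the Dehn filling is a direct calculation, the volume bound from simplicial volume is a black-box invocation of Gromov's work, and monotonicity follows from the sign and leading behavior in the Neumann--Zagier expansion.
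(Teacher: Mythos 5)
The paper offers no proof of this statement at all: it is quoted from Thurston's notes as a black box (the citation is the proof), so there is nothing internal to compare your argument against. What you have sketched is the genuine proof of the quoted theorem, along the standard route: deform the complete structure through incomplete structures parametrized by generalized Dehn surgery coordinates, identify the metric completion with the filled manifold when the holonomy condition $pu+qv=2\pi i$ holds, use Gromov--Thurston simplicial volume for $vol(M(\gamma))\le vol(M)$, and use the Neumann--Zagier expansion for the asymptotics. Two caveats on the sketch itself. First, the opening move --- ``start with an ideal triangulation produced by Epstein--Penner and take shape parameters in $\mathbb{H}^2$'' --- is a well-known gap in the triangulation-based proof: the Epstein--Penner decomposition is into ideal polyhedra whose subdivisions need not consist of positively oriented tetrahedra, and it is not known that every cusped hyperbolic $3$-manifold admits a geometric ideal triangulation. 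The rigorous versions of the argument (Thurston's original deformation of the holonomy representation via $H^1$ of the character variety, or Neumann--Zagier as completed by Petronio--Porti using partially flat triangulations) route around this; you gesture at the representation-variety version in your last paragraph, and that is the version you would actually need to run rather than the naive gluing-equation one. Second, literal monotonicity of $vol(M(p_i\alpha+q_i\beta))$ does not follow from $p_i^2+q_i^2\to\infty$ alone: the Neumann--Zagier expansion makes the volume deficit asymptotic to $\pi^2\,\mathrm{Area}(T)/\mathrm{length}^2(p_i\alpha+q_i\beta)$, so the sequence is eventually monotone only if those normalized lengths are eventually monotone (true for the slopes $-1/n$ used later in the paper, false for an arbitrary sequence). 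That imprecision is already in the paper's phrasing, so it is not your error, but a careful write-up should either reorder the sequence or weaken ``monotonically'' to ``from below.'' With those two repairs your outline is the correct proof.
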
 
     Both $X$ and $Y$ have a boundary component corresponding to the knot $\gamma$ and are hyperbolic. Using Thurston's theorem we know that filling the cusp corresponding to $\gamma$ will produce another hyperbolic manifold except for finitely many exceptional slopes. By fixing the slope $-1/n$ and a large enough $n$ so that $-1/n$ is never exceptional we can then compare $X_n$ and $Y_n$. 
\begin{Lemma} \label{juicyLemma}
    For sufficiently large $n$ the manifolds $\STmLone_n$ and $\STmLtwo_n,$ are hyperbolic and pairwise non-homeomorphic. 
\end{Lemma}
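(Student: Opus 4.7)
The plan is to apply Thurston's hyperbolic Dehn surgery theorem (Theorem~\ref{thurston}) at the $\gamma$-cusp of each of $\STmLone$ and $\STmLtwo$ and then distinguish the resulting filled manifolds by their hyperbolic volumes. By Lemma~\ref{listoflimits}, $\STmLone$ and $\STmLtwo$ are cusped hyperbolic $3$-manifolds, and each carries a boundary torus corresponding to the unfilled component $\gamma$. I would fix a basis $(\mu_\gamma,\lambda_\gamma)$ for the first homology of this torus, with $\lambda_\gamma$ the Seifert framing of $\gamma$ inherited from $S^3$, so that the slope $-1/n$ is represented by the class $-\mu_\gamma+n\lambda_\gamma$. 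Since $1+n^{2}\to\infty$ as $n\to\infty$, Theorem~\ref{thurston} applies at this cusp.

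Doing so yields, for all sufficiently large $n$, that $\STmLone_n$ is hyperbolic and that $vol(\STmLone_n)$ converges monotonically from below to $vol(\STmLone)=29.6209\ldots$; the identical argument with $\STmLtwo$ in place of $\STmLone$ shows that $\STmLtwo_n$ is hyperbolic with $vol(\STmLtwo_n)\nearrow vol(\STmLtwo)=30.3314\ldots$. Since the two target volumes differ by more than $0.7$, I can choose $N$ large enough that for every $n\geq N$ both manifolds are hyperbolic and, for concreteness, $vol(\STmLone_n)<29.63<vol(\STmLtwo_n)$.

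To finish, I would invoke Mostow--Prasad rigidity, which says that hyperbolic volume is a topological invariant of finite-volume hyperbolic $3$-manifolds. Hence $\STmLone_n$ and $\STmLtwo_n$ cannot be homeomorphic for any $n\geq N$. The strict monotonicity clause of Theorem~\ref{thurston} additionally gives that the volumes are strictly increasing within each of the two sequences for $n$ large and lie in disjoint intervals across the sequences, so if ``pairwise'' is meant to apply to the entire collection $\{\STmLone_n,\STmLtwo_n:n\geq N\}$, that stronger statement follows at no extra cost.

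I do not anticipate a serious obstacle here: the argument is essentially a direct application of Theorem~\ref{thurston} combined with the monotone convergence of volumes from below and the topological invariance of hyperbolic volume. The only mild care point is making sure that the basis used to parametrize the slopes $-1/n$ is the Seifert framing of $\gamma$ in $S^3$, which is already the convention fixed in Definition~\ref{DefP}.
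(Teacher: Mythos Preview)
Your proposal is correct and follows essentially the same approach as the paper: apply Theorem~\ref{thurston} at the $\gamma$-cusp of the hyperbolic manifolds $\STmLone$ and $\STmLtwo$ from Lemma~\ref{listoflimits}, then separate $\STmLone_n$ from $\STmLtwo_n$ by the inequality $vol(\STmLone_n)<vol(\STmLone)<vol(\STmLtwo_n)$ for $n$ large. The only cosmetic differences are that you name Mostow--Prasad rigidity explicitly and pick a numerical threshold, whereas the paper leaves these implicit.
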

\begin{proof}
    From Lemma \ref{listoflimits}  $\STmLone$ and $\STmLtwo$ are hyperbolic and have pairwise distinct volumes. Additionally, each has a boundary component corresponding to the knot $\gamma$ as seen in Definition \ref{ManiL}. We defined $\STmLone_n$ and $\STmLtwo_n,$ to be the result of filling this cusp by $-\frac{1}{n}$. From the assumption that $n$ is sufficiently large and Theorem \ref{refhyper}, $\STmLone_n$ and $\STmLtwo_n,$ are hyperbolic and their hyperbolic volumes approach  $vol(\STmLone)$ and $vol(\STmLtwo)$. Since $vol(\STmLtwo)>vol(\STmLone)$, once $n$ is sufficiently large $vol(\STmLtwo_n)>vol(\STmLone)$ and consequently $vol(\STmLtwo_n)>vol(\STmLone_n)$. Therefore, $\STmLone_n$ and $\STmLtwo_n$ have pairwise distinct volumes and are pairwise non-homeomorphic. 
\end{proof}
We now construct the knots $A_n$ and $B_n$ in $V$. Note both components of $L_n$ are isotopic to core curves, by performing $0$-surgery on one we again obtain $V$. Keeping track of the other component after surgery will give us a hyperbolic knot in $V$ which we will show is homotopic to a core curve.
\begin{figure}[h]
    \centering
    \labellist 
    \small \hair 2pt
    \pinlabel \textcolor{green}{$0$} at 200 400
    \pinlabel \textcolor{red}{$A_n$} at 570 300
    \pinlabel \textcolor{cyan}{$\ast$} at 190 275
    \pinlabel \textcolor{blue}{$-\frac{1}{n}$} at 330 200
    \endlabellist
    \includegraphics[width=3in]{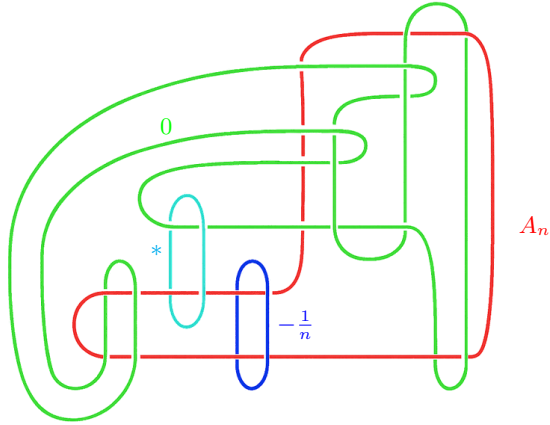}
    \caption{Diagram of $A_n$}
    \label{tildeAasknot}
\end{figure}
\begin{Definition} \label{patternref}
    The knot $A_n \subset V$ is the result of $\tilde{A_n}$ after performing zero surgery on $B$. This is depicted in Figure \ref{tildeAasknot}. The knot $B_n \subset V$ is the result of $B$ after performing zero surgery on $\tilde{A_n}$.  
\end{Definition}
\par All of our work in Section $3$ so far has been to produce an infinite family of pairs of knots in $V$. These pairs must satisfy the following key properties: there are infinitely many of them which are hyperbolic and distinct as pairs. Additionally, the knots making up each pair must have non-homeomorphic exteriors, homemorphic 0 surgeries, and be homotopic to core curves. We note the exteriors of $A_n$ and $B_n$ exactly correspond to $Y_n$ and $X_n$ by definition, which were shown to be distinct for sufficiently large $n$ in Lemma $\ref{juicyLemma}$. Furthermore, both their $0$-Dehn surgeries result in $Z_n$ because in either case we have done $(0,0)$ surgery on $L_n$. We note the pairs are distinct by the following two observations. First, in Lemma $\ref{juicyLemma}$ we showed that $vol(Y_n)>vol(X)>vol(X_m)$ which implies $A_n$ and $B_m$ have non-homeomorphic exteriors for sufficiently large $m,n$. Second, we note there are no infinite subsequence of the $X_n$'s with fixed volume. Suppose one did exist, then each element in the sequence would have to have the same volume as $X$. However, this is impossible because Dehn filling always decreases the volume of the resulting manifold. As such every $A_n$ and $A_m$ have different volumes for $n,m$ sufficiently large. We now prove a short lemma that $A_n$ and $B_n$ are homotopic which in part relies on the the fact we have a surgery diagram for these knots in $S^3$ so linking numbers are well defined. 
\begin{Lemma} \label{homotopyLemma}
    The knots $A_n$ and $B_n$ are homotopic to core curves in $V$. 
\end{Lemma}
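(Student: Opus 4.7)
The plan is to identify homotopy classes of loops in $V \cong D^2 \times S^1$ with winding numbers and then to compute the winding numbers of $A_n$ and $B_n$ from the surgery diagram. Since $\pi_1(V) \cong \mathbb{Z}$ is abelian, two loops in $V$ are freely homotopic if and only if they carry the same winding number up to sign, and a core curve is precisely a loop of winding number $\pm 1$. Hence it suffices to show that each of $A_n$ and $B_n$ has winding number $\pm 1$ in the corresponding new solid torus.

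The key tool is that $-1/n$-surgery on the unknot $\gamma \subset S^3$ gives back $S^3$, so the three-component link $\tilde{A_n} \cup B \cup \mu$ continues to lie inside $S^3$ and carries well-defined pairwise linking numbers. By the standard twisting formula, each $\operatorname{lk}(\tilde{A_n}, X)$ can be expressed in terms of the linking numbers in Figure~\ref{linkP}. To pass from these linking numbers to the winding number of $A_n$ in $V' := V_0(B)$, I would run a Mayer--Vietoris computation: because $B$ is isotopic to a core of $V$, the exterior $V\setminus\nu(B) \cong T^2\times I$ has $H_1 \cong \mathbb{Z}^2$, and the $0$-surgery on $B$ along the Seifert framing from $S^3$ introduces a single relation. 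Writing $[\tilde{A_n}]$ in a basis of $H_1(V\setminus\nu(B))$ whose image under the inclusion into $V$ is the winding number in $V$ and under the inclusion into $S^3\setminus\nu(B)$ is $\operatorname{lk}_{S^3}(\tilde{A_n}, B)$, one reads off the winding number of $A_n$ in $V'$ as an explicit $\mathbb{Z}$-linear combination of $\operatorname{lk}_{S^3}(\tilde{A_n}, \mu)$, $\operatorname{lk}_{S^3}(\tilde{A_n}, B)$, and a framing correction between the Seifert framing of $B$ in $S^3$ and the core-parallel framing of $B$ in $V$. Substituting the linking numbers from the diagram should then yield $\pm 1$; the symmetric argument, swapping the roles of $\tilde{A_n}$ and $B$, handles $B_n$.

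The main obstacle I anticipate is the combinatorial bookkeeping: fixing consistent orientation and sign conventions throughout the twist formula and the homological computation, and correctly computing the framing discrepancy between $\lambda_B$ and the product-structure framing in $V$ for the specific diagram at hand. Once those are pinned down, the verification that the winding number is $\pm 1$ reduces to a short numerical check using the linking numbers in Figure~\ref{linkP}.
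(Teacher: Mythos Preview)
Your approach is essentially the same as the paper's: both exploit that $B$ (resp.\ $\tilde A_n$) is isotopic to a core of $V$, so $V\setminus\nu(B)\cong T^2\times I$, and then read off the class of $\tilde A_n$ in the rank-two abelian group via linking numbers before killing the $0$-framing curve. The paper just does this more directly at the level of $\pi_1$: it fixes the basis $(\mu_B,\lambda_B)$ of $\pi_1(V_B)\cong\pi_1(\partial\nu(B))$, writes $[\tilde A_n]=\operatorname{lk}(B,\tilde A_n)[\mu_B]+\operatorname{lk}(\mu,\tilde A_n)[\lambda_B]=[\mu_B]+[\lambda_B]$ from the diagram, and then observes that $0$-surgery attaches a disk along $\lambda_B$, leaving the generator $[\mu_B]$. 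No Mayer--Vietoris is needed, and no twist formula either: since $\gamma$ links neither $B$ nor $\mu$ in Figure~\ref{linkP}, the relevant linking numbers are independent of $n$, which also removes the bookkeeping you were anticipating.
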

\begin{proof}
    $A_n$:  When we remove a neighborhood of $B$, because it is isotopic to a core curve, we obtain $V-\nu(B) \cong T^2 \times I$. Recall a framing of $B$ and the meridian of $B$ define a basis for $\pi_1(\partial \nu(B))$. Now we choose our basis for $\pi_1(\partial \nu(B))$ as the zero framing $\lambda_B$ and $\mu_B$. Since $V_B \cong T^2 \times I$, it is homotopy equivalent to $\partial\nu(B)$ and $(\mu_B,\lambda_B)$ give a basis for $\pi_1(V_B)$. In this basis we know $[A_n]=lk(B,\tilde{A_n})[\mu_B]+lk(\mu,\tilde{A_n})[\lambda_B]=[\mu_B]+[\lambda_B]$ for any $n$. Now in $V_0(B) \cong V$ we attached a disk exactly to $\lambda_B$ which kills this generator, as such $[\tilde{A}_n]$ represents a positive generator of $\pi_1(V)$ so it is homotopic to a core curve. 
\\
    $B_n$: The argument for $B_n$ is given by switching the roles of $B$ and $\tilde{A_n}$.
    \end{proof}

\section{Main Theorem}
    Before we begin the proof we note that all of the manifolds which appear in the main proof are of the form described at the end of Section $2$. Namely, they are formed by gluing two hyperbolic manifolds along an incompressible torus and are therefore irreducible. We also provide additional context for an assertion that appears in the main proof. When doing surgery on a core curve $c$ of $V$ in the construction of $A_n$ or $B_n$ we obtain a solid torus i.e. $V_0(c) \cong V$. However the surgery solid torus $V_0(c)$ undergoes a key change when compared to the original $V$. Namely, the meridional curves and longitudinal curves of $V$ become  longitudinal and meridional curves of $V_0(c)$.  Recall, in the construction of the knots $A_n$ and $B_n$ we did surgery on a core curve of $V$. For any knot $K \subset M$ we have two alternative constructions of the satellite knots $A_n(K_\ell)$ and $B_n(K_\ell)$. First, construct $A_n$ or $B_n$ as a knot in $V_0(c)$ then do a satellite. Second, treat $P_n$ as a knot in $V$ and form the satellite link $P_n(K_\ell)$ then do $0$-surgery of the appropriate component of $P_n(K)$ to obtain $A_n(K_\ell)$ or $B_n(K_\ell)$. The first version distinguishes the knots $A_n(K_\ell)$ or $B_n(K_\ell)$ more easily and the second makes the equivalence of their $0$-surgeries more clear.

\begin{proof} [Proof of Theorem \ref{mainthm}]
    By a Theorem of Myers \cite{Myers}, since $\partial M$ contains no $2$-spheres, every homotopy class $h$ in $\pi_1(M)$ has a knot representative $K$ which is hyperbolic. Choose a framing $\ell$ for $K$ and an $n$ sufficiently large for Lemma $\ref{juicyLemma}$ to apply. Consider the satellite knots $A_n(K_\ell)$ and $B_n(K_\ell)$ using the pattern knots from Definition $\ref{patternref}$. We know the knot exteriors $M - \nu ({A_n(K_\ell)})$ and $M - \nu ({B_n}(K_\ell))$ have JSJ decompositions with two pieces, the pattern space and $M_K$. Therefore, the JSJ decompositions are  $M_K \cup Y_n$ and $M_K \cup X_n$ respectively. Any homeomorphism of $3$-manifolds must send JSJ pieces to JSJ pieces and the manifolds $X_n$ and $Y_n$ are non-homeomorphic by Lemma $\ref{juicyLemma}$, therefore $M - \nu ({A_n(K_\ell)})$ and $M - \nu ({B_n}(K_\ell))$ are distinct. In order to show $M_0(A_n(K_\ell)) \cong M_0(B_n(K_\ell)) $ we first construct the satellite link $L_n(K_\ell)$ using the link $L_n$ from definition $\ref{DefP}$. As described at the beginning of section $4$, doing $0$-surgery of one component of $L_n(K_\ell)$ and leaving the other component unfilled corresponds to the exteriors of $A_n(K_\ell)$ and $B_n(K_\ell)$ so, $M_{*,0}(L_n(K_\ell)) \cong M - \nu ({A_n(K_\ell)})$ and $M_{0,*}(L_n(K_\ell)) \cong M - \nu ({A_n(K_\ell)})$. Then $0$-surgery of $A_n$ or $B_n$ corresponds exactly with $(0,0)$ surgery of $P_n(K_\ell)$ and as a result $M_{0}(A_n(K_\ell)) \cong M_{0,0}(L_n(K_\ell)) \cong M_{0}(B_n(K_\ell))$. Finally, from the fact that $A_n$ and $B_n$ are homotopic to core curves of $V$ as described in Lemma $\ref{homotopyLemma}$, the knots $A_n(K_\ell)$ and $B_n(K_\ell)$ are both homotopic to the core curve of $\nu(K)$ in $M$ which is exactly $K$. 
\end{proof}

\bibliography{ref2}{}
\bibliographystyle{plainurl}

\end{document}